
\documentclass[12pt,a4paper]{article}
\usepackage[utf8]{inputenc}
\usepackage{graphicx}
\usepackage{amsthm}
\usepackage{amsmath}
\usepackage{amssymb}
\usepackage{xcolor}
\usepackage{float}
\usepackage{tikz}
\usepackage{pgfplots}
\usepackage{soul} 
\usepackage{subfigure}

\usepackage[a-2u]{pdfx}

\usepgfplotslibrary{external}

\usepackage{tikz}
\usepackage{ifthen}             
\usetikzlibrary{calc}           

\usepackage[style=numeric-comp]{biblatex}
\renewbibmacro{in:}{}
\addbibresource{mybib.bib}
\DeclareNameAlias{default}{last-first}

\newtheoremstyle{cited}%
  {3pt}
  {3pt}
  {\itshape}
  {}
  {\bfseries}
  {.}
  {.5em}
  {\thmname{#1} \thmnumber{#2} \thmnote{\normalfont#3}}

\theoremstyle{cited}



\theoremstyle{plain}
\newtheorem{theorem}{Theorem}
\newtheorem{lemma}[theorem]{Lemma}

\title{\textbf{Hypertree shrinking\\avoiding low degree vertices}}

\author{Karol\'{\i}na Hylasov\'{a}$^{\:1,2}$ \and Tom\'{a}\v{s}
  Kaiser$^{\:1,3}$}

\date{}

\begin{document}
\maketitle
\footnotetext[1]{Department of Mathematics and European Centre of
  Excellence NTIS (New Technologies for the Information Society),
  University of West Bohemia, Pilsen, Czech Republic.}%
\footnotetext[2]{E-mail: \texttt{khylas@kma.zcu.cz}.}%
\footnotetext[3]{E-mail: \texttt{kaisert@kma.zcu.cz}.}%

\begin{abstract}
  The shrinking operation converts a hypergraph into a graph by
  choosing, from each hyperedge, two endvertices of a corresponding
  graph edge. A hypertree is a hypergraph which can be shrunk to a
  tree on the same vertex set. Klimo\v{s}ov\'{a} and Thomass\'{e}
  [J. Combin. Theory Ser. B 156 (2022), 250--293] proved (as a tool to
  obtain their main result on edge-decompositions of graphs into paths
  of equal length) that any rank $3$ hypertree $T$ can be shrunk to a
  tree where the degree of each vertex is at least $1/100$ times its
  degree in $T$. We prove a stronger and a more general bound,
  replacing the constant $1/100$ with $1/2k$ when the rank is $k$. In
  place of entropy compression (used by Klimo\v{s}ov\'{a} and
  Thomass\'{e}), we use a hypergraph orientation lemma combined with a
  characterisation of edge-coloured graphs admitting rainbow spanning
  trees.
\end{abstract}

\section{Introduction}

Hypertrees are a generalisation of trees to the context of
hypergraphs. Similarly to~\cite{FKKr}, we define a \emph{hypertree} as
a hypergraph $H$ satisfying the following two conditions: for each set
$X$ of vertices, at most $|X|-1$ hyperedges are subsets of $X$, and
the equality holds if $X$ is the full vertex set of
$H$.\footnote[4]{This structure is called `spanning hypertree' in
  \cite{FKKr}. We define a \emph{spanning hypertree} of a hypergraph
  $H'$ as a subhypergraph of $H'$ which is a hypertree and contains all
  the vertices of $H'$.}  Clearly, a graph is a hypertree if and only
if it is a tree.

In further support for viewing hypertrees as hypergraphic analogues of
trees, it is known~\cite{Lor} (cf. also~\cite[Theorem~2.3]{FKKr}) that
spanning hypertrees of a hypergraph $H$ are the bases of a matroid
associated with $H$, just like spanning trees are the bases of the
cycle matroids of graphs.

It was proved by Lov\'{a}sz~\cite{Lov} that hypertrees may
equivalently be characterised as hypergraphs in which it is possible
to choose two vertices from each hyperedge in such a way that the
chosen pairs, viewed as edges of a graph on the vertex set of $H$,
form a tree. Using this equivalence, Lov\'{a}sz further proved that hypertrees
are 2-colourable as conjectured by Erd\H{o}s.

The above operation that produces a graph from a hypergraph by
choosing a pair of vertices in each hyperedge will be called
\emph{shrinking}. Thus, a hypertree is a hypergraph which can be
shrunk to a tree.

Klimo\v{s}ov\'{a} and Thomass\'{e}~\cite{klimosova} derived a result
on hypertree shrinking that preserves vertex degree up to a constant
factor, and used it as one of the tools needed to obtain their main
result about decompositions of 3-edge-connected graphs into paths of
equal length. Their lemma on shrinking~\cite[Lemma 22]{klimosova}
is as follows:
\begin{lemma}\label{l:KT}
  Let $H$ be a hypertree with hyperedges of size at most three. It is
  possible to shrink $H$ to a tree $T$ such that for every vertex $v$
  of $H$,
  \begin{equation*}
    d_{T}(v) \geq \frac{d_H(v)}{100}.
  \end{equation*}
\end{lemma}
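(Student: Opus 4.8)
The plan is to recast the problem as one of finding a rainbow spanning tree in a suitable edge-coloured graph, and to use a preliminary orientation of the hyperedges to control degrees. Write $k=3$, let $V$ be the vertex set of $H$ and $n=|V|$. First I would choose, for every hyperedge $e$ of size $\ge 3$, a vertex $\phi(e)\in e$, the \emph{head} of $e$, and from $H$ and $\phi$ build an edge-coloured multigraph $G=G_\phi$ on $V$ whose colours are the hyperedges of $H$: a size-$2$ hyperedge $\{x,y\}$ contributes the single edge $xy$, and a hyperedge $e$ of size $\ge 3$ contributes the edges $\phi(e)v$ for all $v\in e\setminus\{\phi(e)\}$, each of colour $e$. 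A rainbow spanning tree of $G$ — a spanning tree using each colour exactly once — is exactly a shrinking of $H$ to a tree in which the pair chosen from each large hyperedge contains its head; so such a tree $T$ satisfies $d_T(v)\ge d_2(v)+|\phi^{-1}(v)|$ for every $v$, where $d_2(v)$ counts the size-$2$ hyperedges at $v$ and $\phi^{-1}(v)$ the large hyperedges with head $v$.

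Next I would apply the characterisation of edge-coloured graphs admitting a rainbow spanning tree: $G$ has one iff for every partition of $V$ into $t$ nonempty parts the number of colours with an edge between two different parts is at least $t-1$. The key point is that the restriction built into $G_\phi$ costs nothing here: for a large hyperedge $e$ with head $a$ the edges of colour $e$ are the $av$ with $v\in e\setminus\{a\}$, so a colour-$e$ edge crosses a given partition iff $a$ lies in a part not containing all of $e$, i.e. iff $e$ is not contained in a single part — precisely the condition for a size-$2$ hyperedge. Hence, for a partition into parts $V_1,\dots,V_t$, the number of crossing colours equals the number of hyperedges contained in no single $V_i$, which is $(n-1)-\sum_i\#\{\text{hyperedges inside }V_i\}\ge (n-1)-\sum_i(|V_i|-1)=t-1$, using the defining inequality of a hypertree for each $V_i$ and the equality case ($X=V$) to get $|E(H)|=n-1$. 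So $G_\phi$ always has a rainbow spanning tree, for every choice of $\phi$.

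It remains to choose $\phi$ so that $d_2(v)+|\phi^{-1}(v)|$ beats $d_H(v)/100$. Let $d_3(v)$ be the number of hyperedges of size $\ge 3$ at $v$. I want an orientation $\phi$ of the large hyperedges with $|\phi^{-1}(v)|\ge\lfloor d_3(v)/k\rfloor$ for all $v$; by the hypergraph orientation lemma (a routine max-flow/Hall argument with unit source-to-hyperedge capacities and sink capacities $f(v)$) this exists iff for every $U\subseteq V$ the number of large hyperedges meeting $U$ is at least $\sum_{v\in U}f(v)$, and this holds because $\sum_{v\in U}d_3(v)=\sum_e|e\cap U|\le k\cdot\#\{e:\ e\cap U\ne\emptyset\}$, so $\#\{e:\ e\cap U\ne\emptyset\}\ge\sum_{v\in U}d_3(v)/k\ge\sum_{v\in U}\lfloor d_3(v)/k\rfloor$. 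With such a $\phi$ one gets $d_T(v)\ge d_2(v)+\lfloor d_3(v)/k\rfloor$; since $\lfloor x/k\rfloor\ge x/(2k)$ whenever $x\ge k$, a short case check (the remaining cases $d_3(v)<k$ being handled using $d_2(v)\ge 1$, or else $d_T(v)\ge 1>d_H(v)/(2k)$ because $v$ is a vertex of the spanning tree) yields $d_T(v)\ge d_H(v)/(2k)=d_H(v)/6$, which is stronger than the $d_H(v)/100$ of Lemma~\ref{l:KT}; and the same argument gives $1/(2k)$ for rank $k$.

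The main obstacle, I expect, is the reduction itself rather than any single estimate: recognising that degree preservation splits into "which endpoint is forced" (the orientation) and "can a tree still be built" (the rainbow condition), and in particular noticing that restricting each hyperedge to the pairs through a prescribed head leaves the partition/crossing quantity untouched — so the hypertree property keeps supplying exactly what the rainbow-spanning-tree characterisation demands. Once that is seen, the orientation lemma and the final arithmetic are routine; the only genuine care is with small-degree vertices and with the loss incurred by rounding $d_3(v)/k$ down, which is what turns the naive factor $1/k$ into $1/(2k)$.
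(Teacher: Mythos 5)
Your proposal is correct and follows essentially the same route as the paper's proof of its Theorem~\ref{t:moje} (which subsumes Lemma~\ref{l:KT}): a Hall-type orientation lemma to pick heads with $|\phi^{-1}(v)|\geq\lfloor d(v)/k\rfloor$, the star construction coloured by hyperedges, and the rainbow spanning tree criterion, ending with the same $1/(2k)$ arithmetic. The only notable deviations are cosmetic: you verify the rainbow condition directly from the defining inequalities of a hypertree (counting hyperedges inside the parts of a partition, giving at least $t-1$ crossing colours), whereas the paper deduces it from Lov\'asz's shrinking characterisation applied to $G_H$ and then observes that replacing complete subgraphs by stars does not affect the condition; and your separate bookkeeping of size-$2$ hyperedges yields the marginally stronger bound $d_T(v)\geq d_2(v)+\lfloor d_3(v)/k\rfloor$.
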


Lemma~\ref{l:KT} is proved using entropy compression, the method
devised to prove an algorithmic version of the Lov\'{a}sz Local
Lemma~\cite{MT}.

In this paper, we use a different method to strengthen
Lemma~\ref{l:KT} in two ways: first, our version ensures a stronger
degree bound, and second, it applies to hypergraphs with an arbitrary
size of the hyperedges. In addition, the proof is conceptually
simpler. We prove:
\begin{theorem}\label{t:moje}
  Let $H$ be a hypertree with hyperedges of size at most $k$. It is
  possible to shrink $H$ to a tree $T$ such that for every vertex
  $v$ of $H$,
    \begin{equation}\label{eq:main}
      d_{T}(v) \geq \max\Bigl\{1, \left\lfloor\frac{d_H(v)}k\right\rfloor\Bigr\}.
    \end{equation}
    In particular, for every vertex $v$ of $H$, $d_T(v)$ is at least $d_H(v)/2k$.
\end{theorem}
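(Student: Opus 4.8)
The plan is to split the construction into two stages. First I would \emph{orient} $H$: choose in each hyperedge $e$ a distinguished vertex $h(e)\in e$, in such a way that every vertex serves as the distinguished vertex of many hyperedges. Then I would \emph{complete} this orientation to a full shrinking, picking in each $e$ a second vertex so that the resulting graph is a tree in which $h(e)$ is always one of the two endpoints of the edge contributed by $e$. If both stages go through, the final tree $T$ satisfies $d_T(v)\ge |h^{-1}(v)|$; and since a spanning tree on at least two vertices has no isolated vertex, in fact $d_T(v)\ge\max\{1,|h^{-1}(v)|\}$. So it is enough to arrange that $|h^{-1}(v)|\ge\lfloor d_H(v)/k\rfloor$ for every $v$. (The one vertex case, where $d_H(v)=0$, should be read with the convention that $H$ has at least one hyperedge, equivalently at least two vertices; note that a hypertree has no isolated vertex, so $d_H(v)\ge 1$ throughout.)

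For the orientation stage I would apply the hypergraph orientation lemma, whose natural target here is: in any hypergraph with all hyperedges of size at most $k$ one can choose $h(e)\in e$ for every hyperedge $e$ so that $|h^{-1}(v)|\ge\lfloor d_H(v)/k\rfloor$ for every vertex $v$. The plausibility comes from the fractional version: assigning hyperedge $e$ the weight $1/|e|\ge 1/k$ at each of its vertices already loads every vertex $v$ with total weight at least $d_H(v)/k$, so $\sum_v\lfloor d_H(v)/k\rfloor\le\sum_e 1=n-1$, and more importantly each individual vertex has enough ``incoming'' fractional mass. The content of the lemma is that this fractional orientation rounds to an integral one without losing more than a floor at any vertex; for $k=2$ this is the classical fact that every graph has an orientation with $d^-(v)\ge\lfloor d(v)/2\rfloor$ (pair up odd degree vertices, take an Eulerian orientation, delete the dummy edges). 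I expect this rounding — i.e.\ checking the Hall/flow feasibility condition that guarantees the integral orientation — to be the main technical obstacle; interestingly, the hypertree hypothesis is not needed at this stage.

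For the completion stage I would invoke the characterisation of edge-coloured graphs admitting a rainbow spanning tree. Colour each candidate graph edge $xy$ with $x,y$ lying in a common hyperedge by that hyperedge; this produces a colouring with exactly $n-1$ colour classes, and since a spanning tree has $n-1$ edges, a rainbow spanning tree uses exactly one edge of each colour — i.e.\ it is precisely a shrinking of $H$ to a tree. To force $h(e)$ to be an endpoint, I would keep in the colour class of $e$ only the edges joining $h(e)$ to the other vertices of $e$. The key observation is that this restricted colour class of $e$ has an edge crossing a partition $\pi$ of $V(H)$ if and only if $e$ is not contained in a single class of $\pi$: because $h(e)\in e$, if $e$ meets two classes then $h(e)$ and some $u\in e\setminus\{h(e)\}$ lie in different ones. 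Hence the required condition (``every partition of $V(H)$ into $t$ classes is crossed by at least $t-1$ colours'') becomes ``every partition into $t$ classes is crossed by at least $t-1$ hyperedges'', which is exactly the hypertree property: at most $|X|-1$ hyperedges lie inside each class $X$, so at most $\sum_X(|X|-1)=n-t$ hyperedges are non-crossing, leaving at least $t-1$ crossing ones. (In particular this condition already forces the coloured graph to be connected, so no separate connectivity assumption is needed.)

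Combining the stages finishes the proof. The rainbow spanning tree $T$ obtained in the completion stage is a shrinking of $H$, and each hyperedge $e$ contributes an edge incident to $h(e)$, so $d_T(v)\ge|h^{-1}(v)|\ge\lfloor d_H(v)/k\rfloor$; together with $d_T(v)\ge 1$ this gives \eqref{eq:main}. The ``in particular'' clause is then immediate from the elementary inequality $\max\{1,\lfloor x\rfloor\}\ge x/2$, valid for all $x\ge 0$, applied with $x=d_H(v)/k$.
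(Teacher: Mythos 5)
Your proposal is correct and is essentially the paper's proof: the same two-stage decomposition into (i) an orientation of $H$ in which every vertex $v$ is the head of at least $\lfloor d_H(v)/k\rfloor$ hyperedges, and (ii) a rainbow spanning tree of the graph formed by the stars centred at the heads, one colour per hyperedge, which is then exactly the desired shrinking. The feasibility check you defer as the ``main technical obstacle'' is in fact the easy part --- for any $F\subseteq V$ each hyperedge contributes at most $k\cdot\tfrac1k=1$ to $\sum_{v\in F}d_H(v)/k$, so $\sum_{v\in F}\lfloor d_H(v)/k\rfloor\le e^*(F)$ and Hall's Marriage Theorem yields the orientation, exactly as in the paper --- while your direct verification of the rainbow (partition) condition from the counting definition of a hypertree is a clean, slightly more self-contained variant of the paper's step, which instead invokes Lov\'asz's shrinkability characterisation together with the observation that replacing the complete graphs by stars does not affect the condition.
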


\section{Preliminaries}
\label{s:preliminaries}

We review some of the basic definitions on hypergraphs. A
\textit{hypergraph} is a pair $H = (V,E),$ where $V$ is a finite set
and $E$ is a set of subsets of $V$. The elements of $V$ are the
\textit{vertices} of $H$ and the elements of $E$ are the
\textit{hyperedges} of $H.$

The \emph{rank} of a hypergraph $H$ is the maximum size of a hyperedge
of $H$. The number of hyperedges of $H$ containing a vertex $v \in V$
is the \textit{degree} of $v$ and is denoted by $d_H(v).$ A hyperedge
is \emph{incident} with a set $X$ of vertices of $H$ if it contains at
least one vertex of $X$.

A hypergraph is \emph{simple} if it does not contain \emph{loops}
(i.e., hyperedges of size 1) nor repeated hyperedges (distinct
hyperedges with identical vertex sets). All the hypergraphs discussed
in this paper will be simple (and we will henceforth drop this
adjective).

It will be useful in our argument to consider directed hypergraphs,
consisting of a set of hyperarcs on a finite vertex set. A
\emph{hyperarc} is a hyperedge $e$ together with a designated
\emph{head}. The other vertices of $e$ are called the \emph{tails} of
$e$.

The \textit{indegree} of vertex $v$ in a directed hypergraph
$\Vec{H}$, denoted by $d_{\Vec{H}}^{IN}(v)$, is the number of
hyperarcs whose head is $v$. Similarly, the \textit{outdegree} of $v$
(denoted by $d_{\Vec{H}}^{OUT}(v)$) is the number of hyperarcs in
which $v$ is a tail. Observe that a hyperarc of size $k$ contributes
to the indegree of exactly one vertex and to the outdegree of $k-1$
vertices.

\section{Tools}

In the proof of Theorem~\ref{t:moje}, we will need two main tools. The
first one is an orientation lemma for hypergraphs with lower bounds on
the indegrees. The second one is a characterisation of edge-coloured
graphs admitting rainbow spanning trees, described in the last few
paragraphs of this section. 

We begin with the topic of hypergraph orientation. A result of Frank,
Kir\'{a}lyi and Kir\'{a}lyi~\cite[Lemma~3.3]{FKK} gives a necessary
and sufficient condition for the existence of an orientation of a
hypergraph with prescribed indegrees. In an early version of this
paper, we derived our orientation lemma (Lemma \ref{L_orientace}
below) from this characterisation, using an approach inspired by the
proof of~\cite[Lemma 3]{Ramsey}. However, as pointed out to us by a
reviewer, it is easier to derive it directly from Hall's Marriage
Theorem as we do below. Given an integer-valued function $f$ on the
vertex set of a hypergraph and a set $X$ of vertices, we write $f(X)$
for the sum of the values of $f$ on the vertices in $X$.

\begin{lemma}\label{L_orientace}
  Let $H = (V, E)$ be a hypergraph and let
  $f : V \rightarrow \mathbb{Z}^+$ be a mapping of the vertex set $V$
  of $H$ to the set of non-negative integers. Assume that for every
  $F\subseteq V$,
    \begin{equation}\label{pocet_incidentnich}
        f(F) \leq e^*(F),
    \end{equation}
    where $e^*(F)$ denotes the number of hyperedges incident with
    $F$. Then there is an orientation $\Vec{H}$ of $H$ such that
    \begin{equation}\label{vysledny_indegree}
        d^{IN}_{\Vec{H}}(v) \geq f(v)
    \end{equation}
    for every $v \in V.$
\end{lemma}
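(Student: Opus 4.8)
The plan is to encode the assignment of heads to hyperedges as a bipartite matching problem and then invoke Hall's Marriage Theorem. First I would build a bipartite graph $G$ as follows. On the left, for each vertex $v \in V$ introduce $f(v)$ ``demand copies'' $v^{(1)}, \dots, v^{(f(v))}$ (vertices with $f(v)=0$ contributing nothing); on the right, take the hyperedge set $E$. Join a demand copy $v^{(i)}$ to a hyperedge $e$ precisely when $v \in e$, i.e.\ when $e$ is incident with $v$. A matching of $G$ that saturates the left side assigns to each demand copy $v^{(i)}$ a distinct hyperedge containing $v$; declaring $v$ to be the head of every hyperedge so assigned to it (and choosing the heads of the remaining, unmatched hyperedges arbitrarily) produces an orientation $\Vec{H}$ of $H$ in which each $v$ is the head of at least $f(v)$ hyperarcs, that is, $d^{IN}_{\Vec{H}}(v) \geq f(v)$ for every $v \in V$, as required.

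It then remains to check that $G$ satisfies Hall's condition on the left: $|N_G(S)| \geq |S|$ for every set $S$ of demand copies. Here I would observe that all copies of a fixed vertex $v$ have the same neighbourhood in $G$, so replacing $S$ by the larger set obtained by adding all copies of every vertex that $S$ meets leaves $N_G(S)$ unchanged while (weakly) increasing $|S|$; hence it suffices to verify the condition for sets of the special form $S_F = \{ v^{(i)} : v \in F,\ 1 \leq i \leq f(v) \}$ with $F \subseteq V$. For such a set, $|S_F| = f(F)$, and $N_G(S_F)$ is exactly the set of hyperedges incident with some vertex of $F$, so $|N_G(S_F)| = e^*(F)$. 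Thus Hall's condition for $S_F$ is precisely the inequality $f(F) \leq e^*(F)$, which is the hypothesis~\eqref{pocet_incidentnich}. Applying Hall's Marriage Theorem yields the desired saturating matching, and the orientation constructed above completes the proof.

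I do not expect a serious obstacle in this argument; the only step that needs a word of justification is the reduction from arbitrary left sets $S$ to the ``vertex-saturated'' sets $S_F$, since this is exactly what makes the bipartite incidence model of $G$ match up, term for term, with the incidence inequality assumed in the lemma.
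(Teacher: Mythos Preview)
Your proposal is correct and is essentially identical to the paper's own proof: both build the bipartite graph with $f(v)$ copies of each vertex on one side and hyperedges on the other, reduce Hall's condition to the ``vertex-saturated'' sets $S_F$, identify $|S_F|=f(F)$ and $|N(S_F)|=e^*(F)$, and then read off the orientation from a saturating matching (with arbitrary heads on unmatched hyperedges).
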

\begin{proof}
  Let $H'$ be a bipartite graph with parts $E$ and $W$, where the
  vertices in $E$ are the hyperedges of $H$, and $W$ is comprised of
  $f(v)$ copies of each vertex $v \in V$. A vertex $e \in E$ is
  adjacent to a vertex $w \in W$ if $w$ is (a copy of) some vertex
  belonging to the hyperedge $e$ of $H$.

  We claim that $H'$ contains a matching covering all of $W$. To prove
  this, we need to verify the condition in Hall's Marriage
  Theorem. That is, for each set $A\subseteq W$, we need to show that
  $|A| \leq |N(A)|$, where $N(A)$ is the set of vertices of $H'$ with
  at least one neighbour in $A$. Let $A$ be given. We may assume that
  for each vertex $v\in V$, $A$ contains either all $f(v)$ copies of
  $v$ or none, because adding another copy of $v$ to $A$ (if there is
  one already) increases $|A|$ but not $|N(A)|$. Under this
  assumption, $|A| = f(F)$, where $F$ is the set of vertices of $H$
  whose copies are included in $A$. At the same time, $|N(A)|$ equals
  the number $e^*(F)$ of hyperedges of $H$ incident with $F$. Thus,
  $|A| \leq |N(A)|$ by (\ref{pocet_incidentnich}). Hall's Marriage
  Theorem implies that $H'$ contains the required matching $M$.
  
  We use $M$ to determine the orientation $\vec H$ of $H$. If there is
  an edge of $M$ joining vertices $e\in E$ and $w\in W$, then the
  vertex of $H$ corresponding to $w$ is designated to be the head of
  the hyperedge $e$ of $H$. The orientation of the hyperedges
  unmatched by $M$ is chosen arbitrarily. Since $M$ covers all $f(v)$
  copies of any vertex $v$ of $H$, we conclude that
  $d^{IN}_{\Vec{H}}(v) \geq f(v)$ as desired.
 \end{proof}


The second main tool which we will use to prove Theorem \ref{t:moje}
is a necessary and sufficient condition for an edge-coloured graph to
contain a rainbow spanning tree.

Let $G$ be a graph with a (not necessarily proper) edge colouring. A
subgraph of $G$ is \emph{rainbow} if it does not contain two edges
with the same color. The following necessary and sufficient condition
for the existence of a rainbow spanning tree has been derived
in~\cite{heterochromatic_ST} and, independently, in \cite[Section
41.1a]{Schrijver}.

\begin{theorem}[\cite{heterochromatic_ST}]\label{HST}
  Let $G$ be a (possibly improperly) edge-coloured graph of order
  $n$. There exists a rainbow spanning tree of $G$ if and only if
  \begin{equation}
    \label{eq:rainbow}
    \begin{split}
    &\textrm{for any set of $r$ colours $(0 \leq r \leq n-2)$, the removal of all
    edges coloured}\\
    &\textrm{with these $r$ colours from $G$ results in a graph with at most $r+1$}\\ 		    &\textrm{components.}
    \end{split}
  \end{equation}
\end{theorem}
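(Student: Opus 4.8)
The plan is to prove the two directions separately, obtaining necessity by a direct counting argument and sufficiency via the matroid intersection theorem applied to the graphic matroid of $G$ and the partition matroid induced by the colouring.

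For necessity, suppose $G$ has a rainbow spanning tree $T$, fix a set $S$ of $r$ colours with $0 \le r \le n-2$, and let $G_S$ be the graph obtained from $G$ by deleting every $S$-coloured edge. Since $T$ is rainbow it uses each colour at most once, so at most $r$ of its $n-1$ edges are $S$-coloured; deleting these from $T$ leaves a spanning forest of $G_S$ with at least $n-1-r$ edges, hence with at most $r+1$ components. As this forest spans $G_S$, the graph $G_S$ itself has at most $r+1$ components, which is exactly~(\ref{eq:rainbow}).

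For sufficiency, I would set up two matroids on the edge set $E$ of $G$: the graphic matroid $M_1$, whose independent sets are the forests and whose rank function satisfies $r_1(A) = n - c(A)$, where $c(A)$ is the number of components of the spanning subgraph $(V,A)$; and the partition matroid $M_2$ whose classes are the colour classes, so that its independent sets are precisely the rainbow edge sets and $r_2(B)$ equals the number of colours occurring in $B$. A rainbow spanning tree is exactly a common independent set of size $n-1$, and since every forest has at most $n-1$ edges it suffices to show that the maximum size of a common independent set equals $n-1$. By the matroid intersection theorem this maximum equals $\min_{A\subseteq E}\bigl(r_1(A) + r_2(E\setminus A)\bigr)$, so the task reduces to showing that this minimum is at least $n-1$ whenever~(\ref{eq:rainbow}) holds.

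The key step is to argue that the minimum is attained at a set $A$ whose complement is a union of colour classes. Given an arbitrary $A$, let $S$ be the set of colours occurring in $E\setminus A$ and replace $E\setminus A$ by the union $B'$ of all colour classes in $S$: this enlarges the complement ($B' \supseteq E\setminus A$) without changing $r_2$, since the same colours occur, while shrinking $A$ to $A' = E\setminus B' \subseteq A$ can only raise the number of components and hence cannot increase $r_1$. Thus it suffices to verify $r_1(A) + r_2(E\setminus A) \ge n-1$ when $E\setminus A$ consists of exactly the edges coloured with some set $S$ of $r$ colours. For such $A$ one has $(V,A) = G_S$, so $r_1(A) = n - c(G_S)$ and $r_2(E\setminus A) = r$, and the required inequality becomes $c(G_S) \le r+1$. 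For $r \le n-2$ this is precisely~(\ref{eq:rainbow}); for $r \ge n-1$ it holds trivially, since then $r_1(A) + r \ge 0 + (n-1)$. I expect the main thing to get right is this translation between the matroid-intersection rank formula and the stated component condition, together with the reduction to colour-class complements and the handling of the boundary range $r \ge n-1$; the arithmetic itself is routine once the two matroids are correctly identified.
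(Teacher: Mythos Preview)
Your proof is correct: the necessity argument is clean, and the sufficiency argument via matroid intersection is the standard one, with the reduction to colour-class complements handled properly and the boundary case $r \ge n-1$ dealt with correctly.

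Note, however, that the paper does \emph{not} give its own proof of this theorem. It is stated as a tool and attributed to \cite{heterochromatic_ST} and to \cite[Section~41.1a]{Schrijver}; there is nothing to compare your argument against in the paper itself. That said, your approach is essentially the one in the Schrijver reference: Section~41.1a there treats rainbow spanning trees precisely as an application of the matroid intersection theorem to the graphic matroid and the colour-class partition matroid, exactly as you do. So while I cannot say your proof matches the paper's (there is none), it does align with one of the two sources the paper cites.
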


Observe that the case $r=0$ of condition~\eqref{eq:rainbow}
corresponds to $G$ being connected.

\section{Shrinking hypertrees}

In this section, we prove Theorem~\ref{t:moje}. We begin with an
application of Lemma~\ref{L_orientace} concerning a specific lower
bound for the indegrees in an orientation of a hypergraph.

\begin{lemma}\label{pomocna_veta}
  Let $H = (V,E)$ be a hypergraph of rank at most $k$. There exists an
  orientation $\Vec{H}$ of $H$ in which
    \begin{equation*}
        d^{IN}_{\Vec{H}}(v) \geq \left\lfloor \frac{d_H(v)}{k} \right\rfloor
    \end{equation*}
    for every vertex $v \in V.$
  \end{lemma}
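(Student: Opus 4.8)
The plan is to deduce this directly from Lemma~\ref{L_orientace}, applied to the function $f\colon V\to\mathbb{Z}^+$ given by $f(v)=\lfloor d_H(v)/k\rfloor$. If $f$ satisfies the hypothesis~\eqref{pocet_incidentnich} of that lemma, then we obtain an orientation $\vec H$ with $d^{IN}_{\vec H}(v)\ge f(v)=\lfloor d_H(v)/k\rfloor$ for every $v$, which is exactly what is claimed. So the whole task reduces to verifying that $f(F)\le e^*(F)$ for every $F\subseteq V$.

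To check this I would use a double-counting argument. Fix $F\subseteq V$ and count incidences between vertices of $F$ and hyperedges of $H$: on one hand this equals $\sum_{v\in F}d_H(v)$, on the other hand it equals $\sum_{e\in E}|e\cap F|$. A hyperedge $e$ with $e\cap F=\emptyset$ contributes nothing, while every hyperedge incident with $F$ contributes $|e\cap F|\le|e|\le k$ because $H$ has rank at most $k$. Hence $\sum_{v\in F}d_H(v)\le k\cdot e^*(F)$. Using $\lfloor d_H(v)/k\rfloor\le d_H(v)/k$ and dividing by $k$, this gives $f(F)=\sum_{v\in F}\lfloor d_H(v)/k\rfloor\le\frac1k\sum_{v\in F}d_H(v)\le e^*(F)$, as needed.

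With~\eqref{pocet_incidentnich} confirmed, Lemma~\ref{L_orientace} delivers the desired orientation and the proof is complete.

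There is essentially no serious obstacle here; the only points that need care are that the rank bound is precisely what caps each incident hyperedge's contribution to the degree sum at $k$, and that the floor function works in our favour (the inequality $\lfloor x\rfloor\le x$ points the right way). It is worth noting that the argument in fact shows a little more: any integer-valued $f$ with $f(v)\le d_H(v)/k$ would do, and the floor is simply the natural integral choice of lower bound on the indegree.
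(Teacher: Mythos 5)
Your proposal is correct and matches the paper's proof: both apply Lemma~\ref{L_orientace} with $f(v)=\lfloor d_H(v)/k\rfloor$ and verify condition~\eqref{pocet_incidentnich} by the same double-counting observation that each hyperedge incident with $F$ contributes at most $k$ to $\sum_{v\in F}d_H(v)$, hence at most $1$ after dividing by $k$. Your closing remark that any integer-valued $f$ with $f(v)\le d_H(v)/k$ works is a fair (if minor) extra observation, but the argument itself is the same as in the paper.
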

\begin{proof}
  We apply Lemma~\ref{L_orientace} with
  $f(v) = \lfloor d_H(v)/k\rfloor$. For a set $F \subseteq V$ of
  vertices of $H$, we verify
  condition~\eqref{pocet_incidentnich}. Clearly,
  \begin{equation}\label{eq:1}
    f(F) \leq \sum_{v\in F} \frac{d_H(v)}k. 
  \end{equation}
  Each hyperedge $e$ of $H$ contributes at most $1$ to the degree of a
  specific vertex $v$, and since $|e| \leq k$, its total contribution
  to the right hand side of~\eqref{eq:1} is at most
  $k\cdot\frac1k = 1$. It follows that the right hand side is at most
  $e^*(F)$, i.e.,~\eqref{pocet_incidentnich} holds.
\end{proof}

Let $H$ be a hypertree of rank $k$, and let $\vec H$ be the
orientation from Lemma~\ref{pomocna_veta}. We construct an
edge-coloured graph $G(\vec H)$ on the vertex set of $\vec H$ by the
following rule: for each hyperarc $\vec e$ of $\vec H$, add to
$G(\vec H)$ a star whose center is the head of $e$ and whose leaves
are the tails of $e$. Furthermore, all the edges of this star have the
same colour, which differs from the colours used for the other stars.

We claim that $G(\vec H)$ has a rainbow spanning tree. To prove this,
we need to verify condition~\eqref{eq:rainbow} of
Theorem~\ref{HST}. Consider first a simple variant of the above
construction: let the graph $G_H$ be obtained by adding, for each
hyperedge $e$ of $H$, a complete graph on the vertex set of
$e$. Observe that since $H$ is a hypertree, $G_H$ has a rainbow
spanning tree. Consequently, condition~\eqref{eq:rainbow} holds for
$G_H$.

Now in the construction of $G(\vec H)$, we used stars in place of the
complete subgraphs. It is, however, easy to see that this replacement
has no effect on the validity of condition~\eqref{eq:rainbow}: indeed,
if instead of each complete subgraph we take any connected subgraph on
the same vertex set, then their union has the same number of
components in each case, and so the validity of
condition~\eqref{eq:rainbow} remains without change.

Thus, $G(\vec H)$ has a rainbow spanning tree as well. Let $T$ be the
tree obtained from it by disregarding the edge colours. It is possible
to shrink $H$ to $T$, since to each hyperedge $e$ of $H$, there
corresponds an edge $e'$ of $T$ whose endvertices are contained in $e$
(namely the edge selected from the star corresponding to an
orientation $\vec e$ of $e$) and the correspondence is
bijective. Moreover, the head of $\vec e$ is an endvertex of $e'$,
which implies that if $v$ is a vertex of $H$, then its degree in $T$
is at least the indegree of $v$ in $\vec H$ --- that is, at least
$\lfloor d_H(v)/k\rfloor$. At the same time, $d_T(v) \geq 1$ since $T$
is a tree. Inequality~\eqref{eq:main} of Theorem~\ref{t:moje} follows.

The second inequality, namely $d_T(v)\geq d_H(v)/2k$, clearly holds if
$d_H(v)$ is at most $2k$. Otherwise it follows from~
\eqref{eq:main} and the inequality
$\left\lfloor x \right\rfloor \geq x/2$ which is valid for all real
$x\geq 1$. This concludes the proof of Theorem~\ref{t:moje}.

\section*{Acknowledgments}

The author K.H. was partially supported by the internal grant SGS-2025-007 of the University of West Bohemia in Pilsen. 

We thank the reviewers of this paper and a related extended abstract
for their helpful comments. We especially thank the reviewer who
suggested that we should simplify the proof of Lemma~\ref{L_orientace}
using Hall's Marriage Theorem.

\printbibliography

\end{document}